%
%
%

\documentclass{amsart}


\usepackage{amssymb}
%
\usepackage{graphicx}        




\newtheorem{theorem}{Theorem}[section]
\newtheorem{lemma}[theorem]{Lemma}
\newtheorem{corollary}[theorem]{Corollary}

\newcommand{\Ob}[1]{\mbox{Forb}({#1})}
\newcommand{\Forb}[1]{\mbox{Forb}({#1})}
\newcommand{\T}[1]{\mathcal{T}_{#1}}
\newcommand{\EE}[1]{\mathcal{E}_{#1}}
\newcommand{\PP}{\mathcal{P}}
\newcommand{\Pl}{\mathcal{P}l}
\newcommand{\OP}{\mathcal{O}\!\mathcal{P}l}
\newcommand{\LL}{\mathcal{L}}
\newcommand{\R}{\mathbb R}
\newcommand{\yty}{\mathrm{Y}\nabla\mathrm{Y}}


\title{Forbidden Minors: Finding the Finite Few}
\author{Thomas W.~Mattman}
\address{Department of Mathematics and Statistics,
California State University, Chico,
Chico, CA 95929-0525}
\email{TMattman@CSUChico.edu}


\begin{document}

\begin{abstract}
The Graph Minor Theorem of Robertson and Seymour asserts
that any graph property, {\em whatsoever}, is determined by an associated finite list of graphs.
We view this as an impressive generalization of Kuratowski's theorem, which characterizes planarity 
in terms of two forbidden subgraphs, $K_5$ and $K_{3,3}$. 
Robertson and Seymour's result empowers students to devise their own Kuratowski type theorems;
we propose several undergraduate research projects with that goal.
As an explicit example, we determine the seven forbidden minors for 
a property we call strongly almost--planar (SAP). A graph is SAP if, for any edge $e$, both 
deletion and contraction of $e$ result in planar graphs.
\end{abstract}

\maketitle

\section{Introduction}

Kuratowski's Theorem~\cite{K}, a highlight of undergraduate graph theory,
classifies a graph as planar in terms of two forbidden subgraphs, $K_5$ and $K_{3,3}$.
(We defer a precise statement to the next paragraph.)
We will write $\Ob{\Pl} = \{K_5, K_{3,3}\}$ where $\Pl$ denotes the planarity property. 
We can think of the Graph Minor Theorem
of Robertson and Seymour~\cite{RS} as a powerful generalization of Kuratowski's Theorem.
In particular, their theorem, which has been called the ``deepest'' and ``most important" result in all of 
graph theory~\cite{KM}, implies that each graph property $\PP$, {\em whatsoever}, is characterized
by a corresponding finite list of graphs. This scaffolding allows students to devise their own 
Kuratowski type theorems. As an example, we will determine the seven forbidden minors for a property that 
we call strongly almost--planar.

\begin{figure}[ht]
\includegraphics[scale=.3]{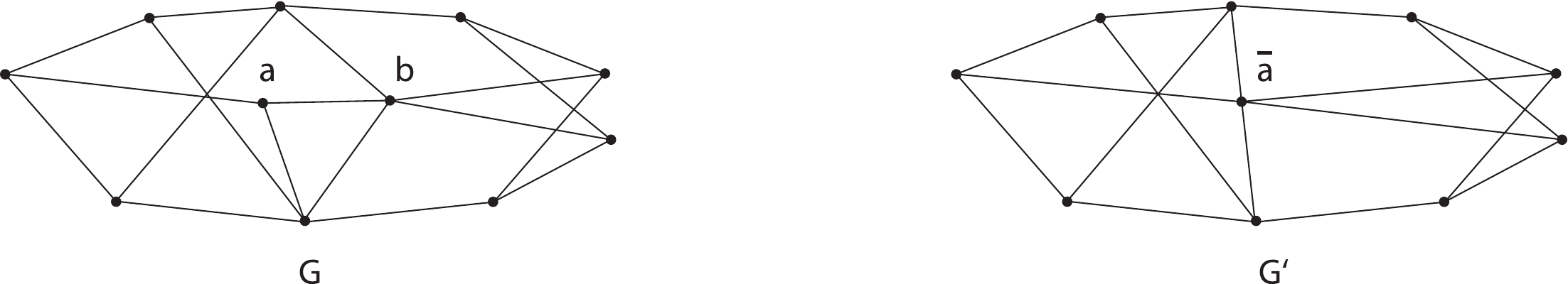}
\caption{Edge contraction.}
\label{fig:cont}       
\end{figure}

To proceed, we must define graph minor, which is a generalization 
of subgraph. We will assume familiarity with the basic terminology of graph theory;
West's~\cite{We} book is a good reference at the undergraduate level.
While Diestel~\cite{D} is at a higher level, it includes an accessible approach to graph minor theory. 
For us, graphs are simple (no loops or double edges) and not directed.
We can define the notion of a minor using graph operations.
We obtain subgraphs through the operations of edge and vertex deletion. For minors, we allow an
additional operation: {\em edge contraction}.
As in Figure~\ref{fig:cont}, when we contract edge $ab$ in $G$, we replace the pair of vertices with a single vertex 
$\bar{a}$ that is adjacent to each neighbor of $a$ or $b$. The resulting graph $G'$ has one less
vertex and at least one fewer edge than $G$. (If $a$ and $b$ share neighbors, even more edges
are lost.) A {\em minor} of graph $G$ is any graph obtained by contracting (zero or more) edges
in a subgraph of $G$.
Recall that $K_5$ is the complete graph on five vertices and $K_{3,3}$ the complete bipartite graph
with two parts of three vertices each. We can now state Kuratowski's Thorem, using the formulation
in terms of minors due to Wagner.
\begin{theorem}[Kuratowski-Wagner~\cite{K, W}]
A graph $G$ is planar if and only if it has no $K_5$ nor $K_{3,3}$ minor.
\end{theorem}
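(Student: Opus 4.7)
The plan is to handle the two implications separately, since they have very different flavors. The forward direction---if $G$ is planar then $G$ has no $K_5$ or $K_{3,3}$ minor---is the easier one. First I would verify directly that $K_5$ and $K_{3,3}$ are themselves non-planar, using Euler's formula $V - E + F = 2$ together with the face-counting bound $E \le 3V - 6$ for simple planar graphs (which rules out $K_5$) and the triangle-free sharpening $E \le 2V - 4$ (which rules out $K_{3,3}$). Then I would observe that planarity is preserved under each of the three minor operations: vertex deletion and edge deletion are immediate, and an edge contraction can be realized in a planar drawing by shrinking the edge to a point inside a small neighborhood. Hence any minor of a planar graph is planar, and a planar $G$ cannot contain a non-planar minor.

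The reverse direction is the substantive half, and I would approach it by contrapositive and strong induction on $|V(G)| + |E(G)|$: assuming $G$ is non-planar, exhibit a $K_5$ or $K_{3,3}$ minor. Taking $G$ to be an edge-minimal non-planar counterexample, the first step is to show $G$ must be 3-connected. If $G$ had a cut vertex, or a 2-vertex cut, one could split $G$ into strictly smaller pieces, each planar by minimality, and paste their planar embeddings together along the cut to obtain a planar embedding of $G$---contradicting non-planarity.

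With 3-connectedness in hand, I would invoke the structural lemma (due to Tutte) that every 3-connected graph on at least five vertices admits an edge $e$ for which the contraction $G/e$ remains 3-connected. Since $G/e$ is a minor of $G$, it still has no $K_5$ or $K_{3,3}$ minor, and being smaller than $G$ it is planar by the inductive hypothesis. Now I would use Tutte's convex embedding theorem: every 3-connected planar graph has an essentially unique planar embedding in which every face is a convex polygon. Locating the contracted vertex $\bar{a}$ in this rigid embedding, I would ``uncontract'' it by splitting $\bar{a}$ back into the two endpoints of $e$ and distributing the cyclic list of neighbors around $\bar{a}$ between the two new vertices. Either the split yields a planar embedding of $G$ (contradicting non-planarity), or the interleaving of neighbors around $\bar{a}$ forces a topological obstruction that a short case analysis converts into an explicit $K_5$ or $K_{3,3}$ minor.

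The main obstacle I expect is the final topological step. The reductions to an edge-minimal, 3-connected counterexample with a contractible edge are technical scaffolding, but they work because each step has a clean combinatorial handle; by contrast, the uncontraction argument requires reading off a non-planar minor from the cyclic rotation of neighbors around $\bar{a}$, and pinning down precisely which interleaving patterns force $K_5$ versus $K_{3,3}$ is where the care lies. The rigidity afforded by Tutte's theorem is essential here: without the uniqueness of the embedding of $G/e$, one could not control how the neighbors of $\bar{a}$ must be arranged.
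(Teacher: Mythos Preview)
The paper does not prove this theorem at all: it is stated as the classical Kuratowski--Wagner theorem with citations to~\cite{K,W}, and is used throughout as an established tool rather than something to be established. So there is no ``paper's own proof'' to compare against.

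That said, your sketch is essentially Thomassen's proof as presented, for instance, in Diestel~\cite{D} (which the paper itself recommends). A couple of attributions and one reduction step deserve care. The contractible-edge lemma---every $3$-connected graph on at least five vertices has an edge $e$ with $G/e$ still $3$-connected---is Thomassen's, not Tutte's (Tutte's wheel theorem is related but different). You also do not need the full strength of Tutte's convex embedding theorem; what the uncontraction step actually uses is that in a $3$-connected planar graph each face is bounded by a cycle, so the neighbors of $\bar{a}$ lie on a single cycle in a well-defined cyclic order. Finally, the reduction to $3$-connectedness is a bit more delicate than you indicate: when $G$ has a $2$-cut $\{u,v\}$ with $uv \notin E(G)$, the pieces $G_i + uv$ are not subgraphs of $G$, and you must argue that each is nonetheless a \emph{minor} of $G$ (by contracting a $u$--$v$ path through the other side) before you can invoke minimality. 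With those points tightened, the argument goes through.
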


Robertson and Seymour's theorem can be stated as follows.
\begin{theorem}[The Graph Minor Theorem~\cite{RS}]
In any infinite sequence of graphs $G_1, G_2, G_3, \ldots$ there are indices $i \neq j$ such that $G_i$ is a minor of $G_j$.
\end{theorem}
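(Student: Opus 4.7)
The plan is to recast the theorem as a well-quasi-order statement: the minor relation well-quasi-orders the class of finite graphs, so that no infinite antichain of pairwise incomparable graphs exists. The standard attack is a \emph{minimal bad sequence} argument. Assume for contradiction that there is an infinite sequence $G_1, G_2, \ldots$ with $G_i$ never a minor of $G_j$ for $i < j$, and among all such ``bad sequences'' choose one whose initial terms minimize some notion of graph complexity, such as the number of edges. The goal is then to produce $i < j$ with $G_i$ a minor of $G_j$, contradicting badness.

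First I would dispatch classical base cases before confronting the general problem. Kruskal's tree theorem, which asserts that finite trees are well-quasi-ordered under topological embedding, yields the statement for the class of forests. One then extends to graphs of bounded tree-width by transporting Kruskal's argument onto tree-decompositions with bounded adhesion, and to graphs embeddable in a fixed surface by combining topological arguments about handles and faces with the minor ordering on the resulting combinatorial embeddings. These cases already demand nontrivial work but are conceptually tractable.

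The heart of the proof is the \emph{Graph Structure Theorem} of Robertson and Seymour: for any fixed graph $H$, every graph with no $H$-minor admits a tree-decomposition whose torsos are ``nearly embeddable'' in a surface of bounded genus, meaning that they embed in such a surface after the deletion of a bounded number of apex vertices and the insertion of a bounded number of vortices of bounded depth. In the minimal bad sequence, $G_1$ is not a minor of any later $G_i$, so every $G_i$ with $i \ge 2$ excludes $G_1$ as a minor; applying the structure theorem with $H = G_1$ produces tree-decompositions of each such $G_i$ whose pieces live in a controlled, labelled class. A Kruskal-style well-quasi-ordering argument on these decomposition trees, combined with the bounded-genus case applied to the pieces, then yields the desired indices $i < j$ with $G_i$ a minor of $G_j$.

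The main obstacle is the Graph Structure Theorem itself, together with the bookkeeping that promotes ``well-quasi-ordering of pieces'' into ``well-quasi-ordering of whole graphs'' through tree-decompositions. Each piece is not a bare graph but carries boundary, apex, and vortex data, and one must define exactly the right labelled quasi-order on these enriched objects so that a Kruskal-style argument applies uniformly. Carrying this out rigorously is the famously delicate enterprise that occupies Robertson and Seymour's twenty-paper \emph{Graph Minors} series; no short proof is known, and the present paper rightly invokes the theorem as a black box rather than attempting to reconstruct it.
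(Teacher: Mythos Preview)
The paper does not prove this theorem at all; it is stated with a citation to Robertson and Seymour~\cite{RS} and then used as a black box to derive the two corollaries about finite forbidden-minor sets. Your final paragraph already recognizes this, so there is nothing to compare your outline against. As a high-level sketch of the actual Robertson--Seymour argument your summary is accurate (minimal bad sequence, Kruskal for trees, bounded tree-width and bounded genus as stepping stones, then the structure theorem for $H$-minor-free graphs feeding a labelled-tree wqo argument), and you are right that filling in the details is far beyond the scope of this expository paper.
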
 
This yields two important corollaries, which we now describe.

Planarity is an example of a property that is {\em minor closed}: If $H$ is a minor of a planar graph
$G$, then $H$ must also be planar. If $\PP$ is minor closed, the Graph Minor Theorem
implies a finite set of forbidden minors.
\begin{corollary}
Let $\PP$ be a graph property that is minor closed. Then there is a finite set of forbidden minors
$\Ob{\PP}$ such that $G$ has $\PP$ if and only if it has no minor in $\Ob{\PP}$.
\end{corollary}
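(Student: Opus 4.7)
The plan is to exhibit a natural candidate for $\Ob{\PP}$, verify it characterizes the property, and then invoke the Graph Minor Theorem to force finiteness. Concretely, I would define $\Ob{\PP}$ to be the collection (taken up to isomorphism) of graphs $H$ that fail to have $\PP$, but such that every proper minor of $H$ does have $\PP$. That is, $\Ob{\PP}$ consists of the minor-minimal obstructions to $\PP$.

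Next I would verify the biconditional. For the easy direction, if $G$ has $\PP$, then since $\PP$ is minor closed, every minor of $G$ has $\PP$, and in particular no minor of $G$ can lie in $\Ob{\PP}$. For the converse, suppose $G$ does not have $\PP$. Among all minors of $G$ that lack $\PP$ (a non-empty set, since $G$ is a minor of itself), pick one, call it $H$, whose vertex count is minimum, breaking ties by minimum edge count. Then $H$ lacks $\PP$, while every proper minor of $H$ is a proper minor of $G$ that has strictly fewer vertices or edges, hence has $\PP$ by minimality. Thus $H \in \Ob{\PP}$, and $G$ has a minor in $\Ob{\PP}$.

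Finally, I would argue finiteness by contradiction. If $\Ob{\PP}$ were infinite, list its distinct members as $H_1, H_2, H_3, \ldots$. The Graph Minor Theorem supplies indices $i \neq j$ with $H_i$ a minor of $H_j$. Since the $H_k$ are pairwise non-isomorphic, $H_i$ is a \emph{proper} minor of $H_j$, so by the defining property of $\Ob{\PP}$, $H_i$ must have $\PP$. But $H_i \in \Ob{\PP}$ says $H_i$ does not have $\PP$, a contradiction.

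I expect the main conceptual hurdle to be step three: recognizing that the Graph Minor Theorem is exactly the well-quasi-ordering statement needed to rule out an infinite antichain of minor-minimal obstructions. Everything else is bookkeeping about minor-closedness and minimality, but this last step is where the depth of Robertson--Seymour is actually used, and care is required to ensure the sequence is enumerated without isomorphic duplicates so that the minor relation produced by the theorem is strict.
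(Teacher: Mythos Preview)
Your argument is correct and is the standard derivation of this corollary from the Graph Minor Theorem. The paper itself does not supply a proof: it simply states the corollary as an immediate consequence of Robertson--Seymour, so there is no detailed argument to compare against. Your write-up fills in exactly the steps one would expect---defining $\Ob{\PP}$ as the minor-minimal non-$\PP$ graphs, verifying the biconditional via minor-closedness and a minimality pick, and then using the Graph Minor Theorem to rule out an infinite antichain---and each step is sound.
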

In honor of the theorem for planar graphs, we call $\Ob{\PP}$ the {\em Kuratowski set} for $\PP$.

Even if $\PP$ is not minor closed, the Graph Minor Theorem determines a finite set. For this, note that
$K_5$ and $K_{3,3}$ are {\em minor minimal nonplanar}; each is nonplanar with every proper minor
planar. More generally, for graph property $\PP$, a graph $G$ is {\em minor minimal for $\PP$} or 
MM$\PP$ if $G$ has $\PP$, but no proper minor does.
\begin{corollary}
Let $\PP$ be a graph property. The set of MM$\PP$ graphs is finite.
\end{corollary}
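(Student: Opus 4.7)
My plan is to argue by contradiction using the Graph Minor Theorem directly. Assume that the set of MM$\PP$ graphs is infinite. Since we consider graphs up to isomorphism, we may list infinitely many pairwise non-isomorphic MM$\PP$ graphs as a sequence $G_1, G_2, G_3, \ldots$.

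By the Graph Minor Theorem applied to this sequence, there exist indices $i \neq j$ such that $G_i$ is a minor of $G_j$. I would then split into two cases. If $G_i$ is a \emph{proper} minor of $G_j$, then $G_j$ has a proper minor (namely $G_i$) with property $\PP$, contradicting that $G_j$ is MM$\PP$. Otherwise $G_i$ equals $G_j$ as a minor, which forces $G_i$ and $G_j$ to be isomorphic, contradicting the fact that the listed graphs are pairwise non-isomorphic.

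The only subtle point, and the step I would be most careful about, is the reduction to a sequence of pairwise non-isomorphic graphs: the Graph Minor Theorem as stated concerns indexed sequences, and one has to note that an infinite collection of MM$\PP$ graphs, being a set of isomorphism classes, can indeed be enumerated without repetition. Once this is in place, both branches of the case analysis close immediately, so I expect the write-up to be short.
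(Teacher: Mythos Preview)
Your argument is correct and is exactly the standard derivation of this corollary from the Graph Minor Theorem; the paper itself does not spell out a proof but simply records the statement as an immediate consequence, so your write-up is in line with what the paper intends. The only cosmetic point is that the case split is unnecessary: since the $G_k$ are pairwise non-isomorphic and $i\neq j$, the minor $G_i$ of $G_j$ is automatically proper, so you can go straight to the contradiction with minor minimality of $G_j$.
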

Every $\PP$ graph has a MM$\PP$ minor, but the converse may fail if the negation,
$\neg \PP$, is not minor closed.
The MM$\PP$ list constitutes a test for $\neg \PP$; a graph with
no MM$\PP$ minor definitely does not have property $\PP$. 

In the next section we summarize the graph properties $\PP$ with known MM$\PP$ or Kuratowski 
set. In Section 3 we illustrate how students might develop their own Kuratowski type
theorem through an explicit example: we determine 
$\Forb{\PP}$ for a property that we call strongly almost--planar. In the final section
we propose several concrete research projects and provide some suggestions about how to choose
graph properties $\PP$.

Throughout the paper, we present a list of `challenges' and `project ideas.'
Challenges are warm up exercises for talented undergraduates. In most cases, the 
solution is known and can be found through a web search or in the 
references at the end of the paper. 
Project ideas, on the other hand, are generally open problems (as far as we know).
Some are quite difficult, but, we hope, all admit openings. 
Indeed, we see this as a major theme in this area of research.
Even if we know MM$\PP$ and Kuratowski sets are finite, a complete
enumeration is often elusive.
However, it is generally not too hard to capture graphs that belong
to the set. These problems, then,
promise a steady diet of small successes along the way
in our hunt to catch all of the finite few.

\section{Properties with known Kuratowski set}

In this section we summarize the graph properties with known MM$\PP$ or Kuratowski 
set.  
First, an important caveat.
While the Graph Minor Theorem ensures these sets are finite, 
the proof is not at all constructive and gives
no guidance about their size. It makes for nice alliteration to talk of the `finite few,' 
but some finite numbers are really rather large.
A particularly dramatic cautionary tale is $\yty$ reducibility 
(we omit the definition) for which Yu~\cite{Y} has found more than
68 billion forbidden minors.

On the other hand, bounding the {\em order} (number of vertices) or {\em size} (number of edges) of a graph is a minor 
closed condition. For example, whatever property you may be interested in, appending the condition 
``of seven or fewer vertices,''
ensures that the set of MM$\PP$ graphs is no larger than 1044, the number of order seven graphs.
In general, it is quite difficult to predict the size of a Kuratowski set in advance and 
researchers in this area often do resort to restricting properties by simple graph parameters
such as order, size, or connectivity.

We will focus on results that generalize planarity in various 
ways. However, we briefly mention graphs of bounded tree-width as another
important class of examples. Let $\T{k}$ denote the graphs of tree-width at most $k$.
For the sake of brevity we omit the definition of tree-width 
(which can be found in~\cite{D}, for example) except for noting that
$\T{1}$ is the set of forests, i.e., graphs whose components are trees.
For small $k$, the obstructions are quite simple: $\Ob{\T{1}} = \{K_{3}\}$, $\Ob{\T{2}} = \{K_4\}$, and 
$\Ob{\T{3}}$ has four elements, including $K_5$~\cite{APC, ST}. However, for $k \geq 4$
the Kuratowski set for $\T{k}$ is unknown.

\noindent%
\textbf{Project Idea 1.} Find graphs in $\T{k}$ for $k \geq 4$. Is $K_{k+2}$ always forbidden?
Is there always a planar graph in $\Forb{\T{k}}$?

We can think of a planar graph as a `spherical' graph since it can be embedded on a sphere with
no edges crossing. More generally, the set of graphs that embed on a particular compact 
surface (orientable or not) is also minor closed. However, to date, (in addition to the sphere) 
only the Kuratowski set for embeddings on a projective plane is known; 
there are 35 forbidden minors~\cite{A, GHW, MT}.
The next step would be {\em toroidal graphs}, those that embed on a 
torus; Gagarin, Myrvold, and Chambers remark that there are at least 
16 thousand forbidden minors~\cite{GMC}. In the same
paper they show only four of them have no $K_{3,3}$ minor. This is
a good example of how a rather large $\Ob{\PP}$ can be tamed by adding
conditions to the graph property $\PP$. While observing that it's straightforward to determine the 
toroidal obstructions of lower connectivity, Mohar and \v{S}koda~\cite{MS} find there are 
68 forbidden minors of connectivity two. Explicitly listing the forbidden minors of lower 
connectivity would be a nice challenge for a strong undergraduate.

\noindent%
\textbf{Challenge 1.} Determine the forbidden minors of connectivity less than two for embedding in the torus.
Find those for a surface of genus two.

The Kuratowski sets for more complicated surfaces
are likely even larger than the several thousand known for the torus.

Outerplanarity is a different way to force smaller Kuratowski sets.
A graph is {\em outerplanar} (or has property $\OP$) if it can be embedded in the 
plane with all vertices on a single face. The set of forbidden minors for this property, 
$\Forb{\OP}$ is well known and perhaps best attributed to folklore (although, see~\cite{CH}).

\noindent%
\textbf{Challenge 2.} Determine $\Forb{\OP}$. (Use Kuratowski's theorem!)

Similarly, one can define outerprojective planar or outertoroidal graphs as graphs that admit embeddings
into those surfaces with all vertices on a single face. There are 32 forbidden minors for the outerprojective planar
property~\cite{AHLM}.

\noindent%
\textbf{Challenge 3.} Find forbidden minors for the outerprojective planar property.

\noindent%
\textbf{Project Idea 2.} Find forbidden minors for the outertoroidal property.

Apex vertices also lead to minor closed properties. Let $v \in V(G)$ be a vertex of graph $G$. 
We will use $G-v$ to denote the subgraph obtained from $G$ by deleting $v$ (and all its edges).
Given property $\PP$, we say that
$G$ has $\PP'$ (or is apex-$\PP$) if there is a vertex $v$ (called an {\em apex}) such that $G-v$ has $\PP$.
If $\PP$ is minor closed, then $\PP'$ is as well. For example, Ding and Dziobiak determined the 57 graphs in $\Forb{\OP'}$~\cite{DD}.
In the same paper, they report that there are at least 396 graphs in the Kuratowski set for apex-planar.

\noindent%
\textbf{Project Idea 3.} Find graphs in $\Forb{\PP'}$ when $\PP$ is toroidal with no $K_{3,3}$, $\T{1}$, $\T{2}$, $\T{3}$,
or for some other property with small $\Forb{\PP}$.

The set of linklessly embeddable graphs are closely related to those that are apex-planar. We say a graph 
is {\em linklessly embeddable} (or has property $\LL$) if there is an embedding in $\R^3$ that contains no 
pair of nontrivially linked cycles. (See \cite{Ad} for a gentle introduction to this idea). An early triumph of 
graph minor theory was the proof that $\Forb{\LL}$ has exactly seven graphs~\cite{RST}.
An apex-planar graph is also $\LL$ 
and, as part of an undergraduate research project, we showed
that $\Forb{\LL} \subset \Forb{\Pl'}$~\cite{BM}. The related idea of knotlessly embeddable (which, 
like $\LL$, is minor closed) has more than 240 forbidden minors~\cite{GMN}.

As a final variation on properties related to planarity, rather than vertex deletion (which gives apex properties), 
let's think about the other two
operations for graph minors, edge deletion and contraction. For graph $G$ and edge $ab \in E(G)$, 
let $G-ab$ denote the subgraph resulting from deletion and $G/ab$ the minor obtained by edge 
contraction. Unlike apex properties, in general 
edge operations do not preserve closure under taking minors.
This is why we frame some results below in terms of MM$\PP$ sets.

As, we've mentioned, there are at least several hundred graphs 
in $\Forb{\Pl'}$. In an undergraduate research project~\cite{LMMPRTW} we found that there 
are also large numbers of graphs that are not simply an edge away from planar. 
Call a graph $G$ NE (not edge apex)
if there is no edge $ab$ with $G-ab$ planar and similarly NC (not contraction apex) if no $G/ab$ is  
planar. We showed that the are at least 55 MMNE and 82 MMNC graphs. On the other hand, 
if we switch from the existential to the universal quantifier, 
we obtain properties that are minor closed with reasonably small Kuratowski sets;
in the next challenge, each $\Forb{\PP}$ has at most ten elements.
Say that a graph $G$ is CA (completely apex) if $G-v$ is planar for every vertex $v$, CE (completely edge
apex) if every $G-ab$ is planar, and CC (completely contraction apex) if every $G/ab$ is planar.

\noindent%
\textbf{Challenge 4.} For $\PP = CA$, show that $\PP$ is minor closed and determine $\Forb{\PP}$. Repeat
for $\PP = CE$ and $CC$.

Instead of flipping quantifiers, we can think about combing operations with other logical connectives. 
For example, Gubser~\cite{G} calls $G$ {\em almost--planar} if, for every $ab \in E(G)$, 
$G-ab$ or $G/ab$ is planar. There are six forbidden minors for this property~\cite{DFM}.
In the next section we determine the Kuratowski set for a property that we call 
{\em strongly almost--planar} or SAP: for every $ab \in E(G)$, both $G-ab$ and $G/ab$ are planar. 
Note that every strongly almost--planar graph is almost--planar.

\section{Strongly almost--planar graphs}
In this section we model how a research project in this area might play out through an explicit 
example, the strongly almost--planar or SAP property: $G$ is SAP if, $\forall ab \in E(G)$, both 
$G-ab$ and $G/ab$ are planar. 

Our first task is to determine whether or not this property is minor closed. If not, we would target the
list of MMSAP graphs. However, as we will now show, SAP is minor closed, meaning our goal is
instead $\Forb{SAP}$. 

\begin{lemma} SAP is minor closed
\end{lemma}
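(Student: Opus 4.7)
The plan is to work directly from the definition of minor rather than checking the three elementary minor operations separately. Let $H$ be a minor of an SAP graph $G$, and fix an arbitrary edge $ab$ of $H$. To conclude that $H$ is SAP, I must show that both $H-ab$ and $H/ab$ are planar.

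By the standard branch-set description of minors, there are pairwise disjoint connected vertex sets $V_u \subseteq V(G)$ for each $u \in V(H)$ witnessing $H$ as a minor of $G$. The edge $ab \in E(H)$ then lifts to some edge $a'b' \in E(G)$ with $a' \in V_a$ and $b' \in V_b$. The first key observation is that $H - ab$ is a minor of $G - a'b'$: one simply performs the same branch-set contractions and deletions on $G - a'b'$ that realize $H$ from $G$, with $a'b'$ removed at the outset rather than being among the edges deleted later. Since $G$ is SAP, $G - a'b'$ is planar, and since planarity is minor closed, $H - ab$ is planar.

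The second key observation is that $H/ab$ is a minor of $G/a'b'$. In $G/a'b'$, the branch sets $V_a$ and $V_b$ merge, via the contracted vertex, into a single connected branch set representing $\bar{ab}$, while all other branch sets are unchanged. Performing the remaining contractions and deletions then produces $H/ab$ from $G/a'b'$. Since $G/a'b'$ is planar by SAP and planarity is minor closed, $H/ab$ is planar. Thus $H$ is SAP, as desired.

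The only bookkeeping hazard I anticipate is verifying that $V_a \cup V_b$ remains a legitimate connected branch set in $G/a'b'$, but this is routine: each of $V_a, V_b$ was connected, and $a'b'$ joins them, so contracting $a'b'$ leaves their union connected and consolidated at the new vertex. No step of the argument uses anything specific about $G$ beyond the SAP hypothesis and the minor-closure of planarity, so the lemma follows with no further case analysis.
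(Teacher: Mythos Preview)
Your proof is correct, and it takes a genuinely different route from the paper's. The paper checks closure under each of the three elementary minor operations (vertex deletion, edge deletion, edge contraction) separately: for vertex deletion it verifies that $(G-v)-ab$ and $(G-v)/ab$ sit inside $G-ab$ and $G/ab$ respectively, with a small case analysis depending on whether $v$ is adjacent to $a$ or $b$; for edge deletion and contraction it observes that $G-ab$ and $G/ab$ are already planar by hypothesis, so any further edge operation stays planar.

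Your approach bypasses this decomposition by working with the branch-set model of an arbitrary minor $H$ of $G$. Lifting the edge $ab\in E(H)$ to a single witness edge $a'b'\in E(G)$ and then arguing that $H-ab$ is a minor of $G-a'b'$ and $H/ab$ is a minor of $G/a'b'$ is clean and avoids all case analysis. The trade-off is that your argument presupposes comfort with the branch-set characterization of minors, whereas the paper's operation-by-operation check is more elementary and fits the paper's undergraduate audience. Your proof also makes transparent a more general principle: any property of the form ``for every edge $e$, both $G-e$ and $G/e$ lie in $\mathcal{Q}$'' is minor closed whenever $\mathcal{Q}$ is, since nothing in your argument uses planarity beyond its minor closure.
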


\begin{proof}
It is enough to observe that SAP is preserved by the three operations used in constructing minors,
vertex or edge deletion and edge contraction.

Suppose $G$ is SAP and $v \in V(G)$. Let $G' = G-v$.
We must show that for each $ab \in E(G')$, both $G'-ab$ and $G'/ab$ are planar. Since
$V(G') \subset V(G)$, we can think of $ab$ as an edge in $E(G)$. Then it's easy to identify 
$G'-ab$ as a subgraph of the planar graph $G-ab$, which shows $G'-ab$ is also planar. 
Similarly, we'll know that $G'/ab$ is planar once we show that it
is a subgraph of $G/ab$. There are a few cases to think about (Is $a$ or $b$ or both adjacent to $v$?)
but it always turns out that $G'/ab = (G/ab) - v$.

For this property, the argument for edge contraction and deletion is quite simple. For any $ab \in E(G)$, 
by assumption $G-ab$ and $G/ab$ are planar. Then any minor of these graphs is again planar, including
those given by deleting or contracting an edge. \qed
\end{proof}

Next, we must generate examples of forbidden minors, meaning graphs that 
are minor minimal for not SAP. We are looking for graphs $G$ that are just barely not SAP: 
although $G$ is not SAP, every proper minor is. Most likely, there's only a single edge $ab$ 
with $G-ab$ or $G/ab$ nonplanar. And that graph is probably minor minimal nonplanar,
so one of the {\em Kuratowski graphs} $K_5$ or $K_{3,3}$.
We will use $K$ to represent a generic Kuratowski graph, that is $K \in \{K_5, K_{3,3} \}$.
In summary, we are looking for graphs of the form $K$ `plus an edge,' where adding an
edge includes the idea of reversing an edge contraction. 

We encourage you to take a minute to see what graphs you can discover that have the form
$K$ `plus an edge'. Hopefully, you will find five $G$ for which $G-ab$ is nonplanar. Perhaps you 
have even more? Remember we want minor minimal examples, so check if any pair are minors
one of the other.

Since edge contraction may be a new idea for the reader, let's delve a little further into examples 
where $G/ab$ is nonplanar. The reverse operation of edge contraction is called a 
{\em vertex split} and defined as follows.
Replace a vertex $\bar{a}$ with two vertices $a$ and $b$ connected by an edge.
Each neighbor of $\bar{a}$ becomes a neighbor of at least one of $a$ and $b$. 

Suppose $G/ab = K_{3,3}$. There are essentially two ways to make a vertex split and recover $G$.
One is to make one of the new vertices, say $a$, adjacent to no neighbor of $\bar{a}$ and the other, $b$,
adjacent to all three. Then, in $G$, $a$ has degree one (its only neighbor is $b$) and $b$ will have degree four. The other option is to make $a$ adjacent to one neighbor of $\bar{a}$ and let $b$ have the other two.
There are other possibilities since we may choose to make both $a$ and $b$ adjacent to one of $\bar{a}$'s neighbors; but such graphs will have one of the two we described earlier as a minor.

\begin{figure}[ht]
\includegraphics[scale=.15]{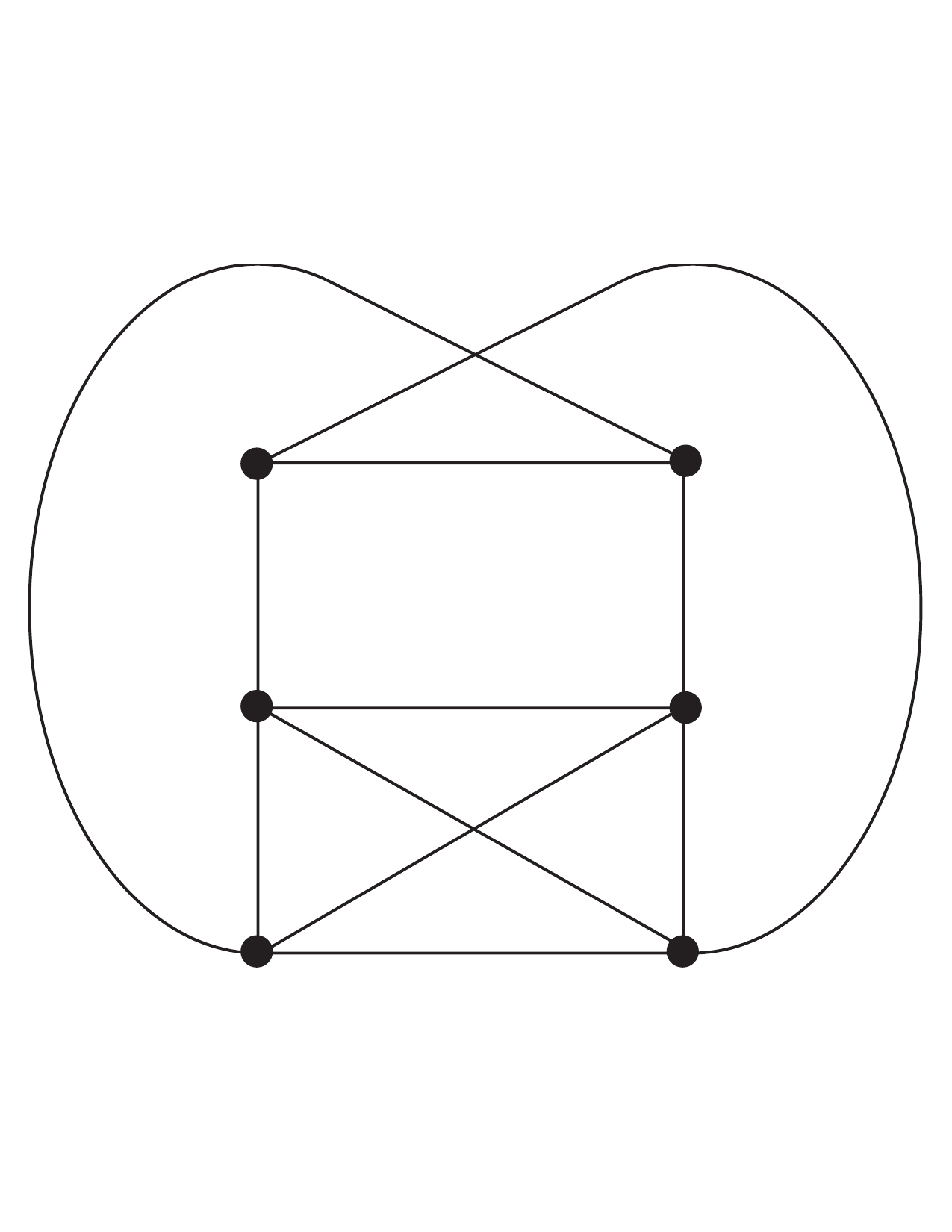}
\caption{The graph $K_{3,3}+2e$.}
\label{fig:K332e}       
\end{figure}

If $G/ab = K_5$, there are three ways to split the degree four vertex $\bar{a}$. Two are similar to the ones
just described for $K_{3,3}$ where we make $a$ adjacent to zero or to one neighbor of $\bar{a}$. 
The third option, split up the four neighbors of $\bar{a}$ by making $a$ and $b$ adjacent to two each, 
results in the
graph $K_{3,3} + 2e$ shown in Figure~\ref{fig:K332e}. However, you should observe that this graph has
a proper subgraph among those found by adding an edge to 
$G-ab = K_{3,3}$.

\begin{figure}[ht]
\includegraphics[scale=.55]{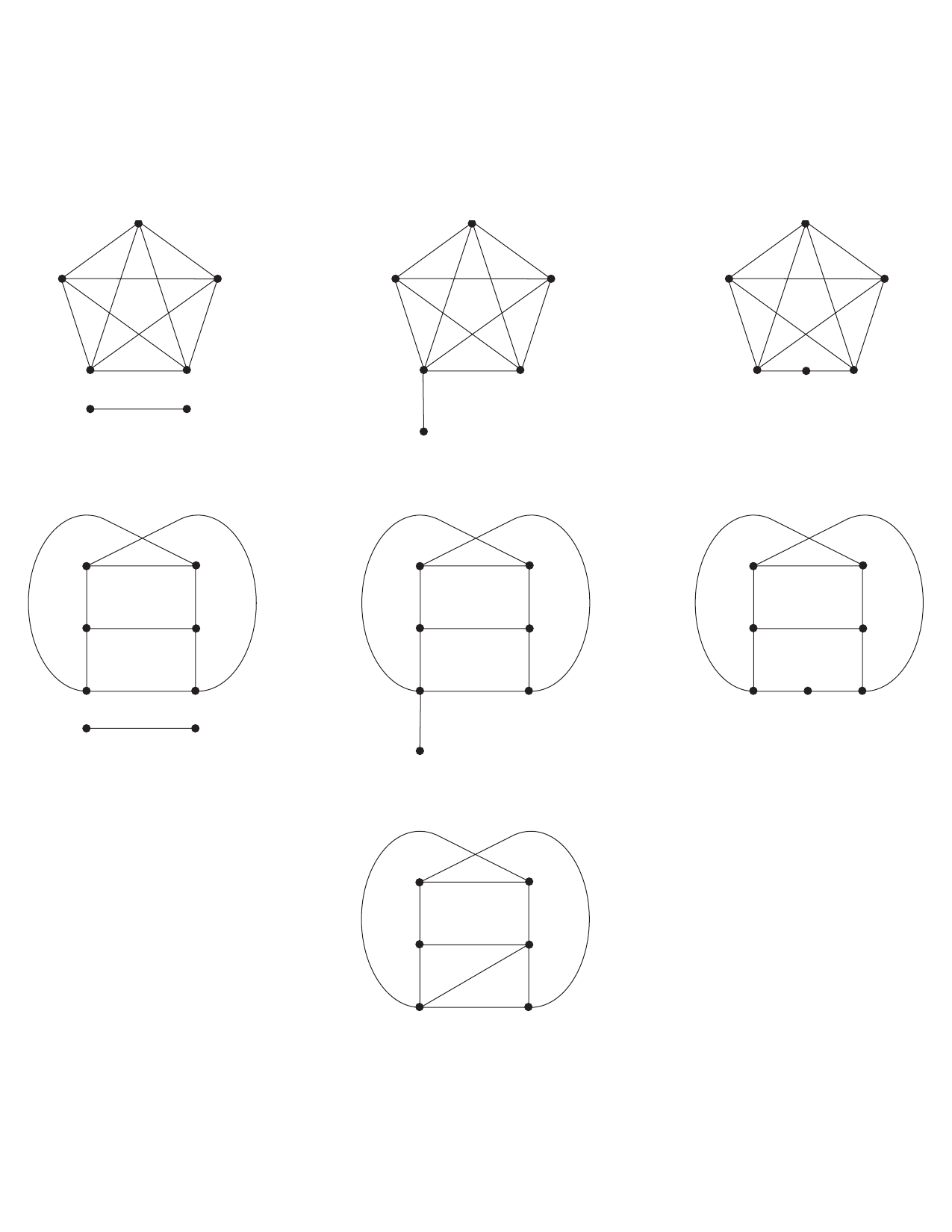}
\caption{Forbidden minors for SAP.}
\label{fig:SAP}       
\end{figure}

A little experimentation along these lines should lead to the seven graphs of Figure~\ref{fig:SAP}.
Note that the six graphs in the top two rows occur in pairs
where we perform similar operations on $K_5$ and $K_{3,3}$. 
We'll write $K \sqcup K_2$, $K \dot\cup K_2$, and $\bar{K}$ for the pairs at left, 
center, and right, respectively and $K_{3,3}+e$ for the seventh graph at the bottom of the figure.
The five graphs with $G-ab$ nonplanar 
are $K_{3,3}+e$, and the pairs $K \sqcup K_2$ and $K \dot\cup K_2$.
The graphs obtained from $G/ab = K$ where $a$ is made adjacent to a single neighbor
of $\bar{a}$ are the $\bar{K}$ pair. When
$a$ shares no neighbors with $\bar{a}$, we construct the graphs $K \dot\cup K_2$ for a second time.
The graph $K_{3,3}+2e$ (see Figure~\ref{fig:K332e}), obtained from $K_5$ by splitting a vertex so that $a$ and $b$ are each adjacent
to two neighbors of $\bar{a}$, is not SAP. But, it has another non SAP graph, 
$K_{3,3}+e$, as a proper subgraph
and cannot be minor minimal. The others are both non SAP and minor minimal, as we now verify.

\begin{lemma} The seven graphs of Figure~\ref{fig:SAP} are minor minimal for not SAP.
\end{lemma}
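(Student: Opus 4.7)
The plan is to verify two things for each of the seven graphs $G$ in Figure~\ref{fig:SAP}: (i) $G$ is not SAP, and (ii) every proper minor of $G$ is SAP. Because the previous lemma shows SAP is minor closed, (ii) reduces to checking that each immediate one-step minor (of the form $G-e$, $G/e$, or $G-v$) is SAP; all further proper minors are then SAP by transitivity.

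Claim (i) is essentially the content of the construction preceding the lemma. For $K \sqcup K_2$ and $K_{3,3}+e$, deleting the distinguished edge leaves a graph containing $K$, hence nonplanar; for $K \dot\cup K_2$, deleting or contracting the pendant edge leaves (up to an isolated vertex) $K$; for $\bar K$, contracting the split edge $ab$ recovers $K$. In each case the chosen edge $ab$ certifies that $G$ is not SAP.

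For (ii), note first that a planar graph is automatically SAP, since planarity is minor closed. So attention may be restricted to immediate minors that remain nonplanar. The key structural observation is that in each $G$ the Kuratowski obstruction is concentrated in a predictable subgraph, so almost every one-step operation either destroys it entirely (leaving a planar minor) or leaves it essentially untouched in a small graph that can be checked directly. For $K \sqcup K_2$ and $K \dot\cup K_2$, the only nonplanar immediate minor is $K$ together with an isolated vertex or pendant; this is SAP because $K$ is minor minimal nonplanar, so $K-cd$ and $K/cd$ are planar for every edge $cd$ of $K$. For $\bar K$ and $K_{3,3}+e$, every immediate minor turns out to be planar, which can be verified by exhibiting an embedding or, failing that, by confirming via Kuratowski's theorem that no $K_5$ or $K_{3,3}$ minor remains.

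The main obstacle is this last case: ruling out nonplanar one-step minors of $\bar{K_5}$, $\bar{K_{3,3}}$, and $K_{3,3}+e$. Vertex- and edge-transitivity of $K_5$ and $K_{3,3}$ collapse the inequivalent operations to a handful per graph, and each remaining case can be resolved by a direct embedding or by the edge-count bounds $m \leq 3n-6$ (or $2n-4$ in bipartite subcases). A convenient fallback is to locate a low-degree vertex, delete it, embed the remainder planarly, and then re-insert the vertex—enough to close any lingering cases.
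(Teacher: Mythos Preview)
Your overall strategy matches the paper's, but there is a genuine error in part (ii). You claim that for $\bar K$ and $K_{3,3}+e$ \emph{every} immediate minor is planar. This is false, and in fact contradicts what you already established in part (i): contracting the split edge $ab$ in $\bar K$ returns $K$, and deleting the extra edge $e$ in $K_{3,3}+e$ returns $K_{3,3}$---both nonplanar. (For $\bar{K_5}$ there is even a second such contraction: identifying $a$ with its unique non-$b$ neighbor also yields $K_5$.) So the ``main obstacle'' you set yourself, ruling out nonplanar one-step minors of these three graphs, cannot be overcome, because the assertion is simply not true.

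The repair is immediate and is exactly the argument you already gave for $K \sqcup K_2$ and $K \dot\cup K_2$: in each of these cases the nonplanar one-step minor is, up to isolated vertices, precisely a Kuratowski graph $K$, and $K$ is SAP because it is minor minimal nonplanar (every $K-cd$ and $K/cd$ is planar). The paper makes this observation uniformly across all seven graphs at once: if $G'$ is any nonplanar graph obtained from one of the figure graphs by a single edge deletion or contraction, then up to isolated vertices $G'$ is a Kuratowski graph, so $E(G') = E(K)$ and every further edge operation on $G'$ is planar. Adopting this uniform phrasing both fixes the error and eliminates the case split that led you astray.
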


\begin{proof}
As we noticed, the two $\bar{K}$ graphs become 
Kuratowski graphs after an edge contraction and the rest have an edge deletion that leaves a nonplanar graph. This shows that none of the graphs are SAP.

It remains to show that every proper minor of each graph is SAP. 
Since SAP is minor closed, it's enough to verify this for
the three basic operations vertex or edge deletion and edge contraction. Actually, since none of our
graphs has an isolated vertex, we need only check edge deletion and contraction. For once we
have those in hand, then any graph of the form $G-a$ is automatically SAP as it's a subgraph
of one of the $G-ab$ graphs formed by deleting an edge on $a$. (Recall that we just proved 
that SAP is closed under taking minors.)

Note that planar graphs are SAP, so we can reduce to the case where an edge deletion or contraction
gives a nonplanar graph. Let $G$ be a graph in the figure and suppose $G'$ is a nonplanar minor
obtained by an edge deletion or contraction. Observe that, up to isolated vertices, $G'$ is simply a
Kuratowski graph $K$. In particular, $E(G') = E(K)$. Since $K$ is minor minimal nonplanar, any
further edge deletion or contraction leaves a planar graph, which shows $G'$ is SAP, as required.
This completes the argument that the seven graphs in the figure are in $\Forb{SAP}$.
\qed
\end{proof}

We will argue that there are no other graphs in $\Forb{SAP}$. We begin with graphs that are not connected.

\begin{lemma} If $G \in \Forb{SAP}$ is not connected, then $G = K \sqcup K_2$ with $K \in \{K_5, K_{3,3}\}$.
\end{lemma}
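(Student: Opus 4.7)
The plan is to isolate one clean observation about SAP on disjoint unions and then apply minor minimality to bootstrap it into the desired structural conclusion. The observation is that for $H = A_1 \sqcup A_2$ with at least one edge in each part, $H$ is SAP if and only if both $A_1$ and $A_2$ are planar. The nontrivial direction follows because for any edge $e \in A_1$, the minor $H - e = (A_1 - e) \sqcup A_2$ must be planar, which forces $A_2$ planar; symmetrically $A_1$ is planar. The converse is immediate since planar graphs are SAP.

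My first task is to locate a nonplanar component. Since $G$ is not SAP, some $G - ab$ or $G/ab$ is nonplanar, so $G$ itself is nonplanar; as $G$ is disconnected and a disconnected graph is planar iff each component is, there must be a nonplanar component $C$. Write $G = C \sqcup B$ where $B$ is the union of the remaining components. I will then use minor minimality to pin down $B$: if $B$ contained two edges, I could delete one to obtain a proper minor $G' = C \sqcup (B - e)$ still possessing an edge in the second factor and a nonplanar $C$, which the key observation rules out as SAP, contradicting minor minimality of $G$. And if $B$ contained an isolated vertex $v$, then $G - v$ would be a proper minor with the same edge set as $G$ and therefore the same non-SAP status, again contradicting minor minimality. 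These two reductions force $B$ to be a single edge, i.e., $B = K_2$.

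With $G = C \sqcup K_2$ in hand, it remains to identify $C$. For every edge $f \in E(C)$, the proper minor $G - f = (C - f) \sqcup K_2$ is SAP by minor minimality; since $C$ is nonplanar and hence has many edges, $C - f$ still has edges, so the key observation forces $C - f$ to be planar. The same reasoning applied to $G/f$ forces $C/f$ planar, and since every $C - v$ sits as a subgraph inside some edge deletion $C - f$ (take $f$ incident to $v$), every vertex-deletion minor of $C$ is planar too. Hence $C$ is minor-minimal nonplanar, and by the Kuratowski-Wagner theorem $C$ is either $K_5$ or $K_{3,3}$, giving $G = K \sqcup K_2$ as claimed.

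The main obstacle is the bookkeeping in the $B$-reduction step, where one must uniformly rule out multiple components, extra edges, and stray isolated vertices; bundling every non-$C$ piece into a single $B$ and invoking the key observation handles all cases in one stroke.
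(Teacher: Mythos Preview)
Your argument is correct, and it takes a somewhat different route from the paper's. The paper first shows that the non-$G_1$ part must contain an edge (otherwise $G_1$ alone would be a proper non-SAP minor), and then finishes in one stroke by invoking the previous lemma: since $G_1$ has a Kuratowski minor $K$ and $G_2$ has a $K_2$ minor, unless $G_1=K$ and $G_2=K_2$ exactly, the already-known forbidden minor $K\sqcup K_2$ sits properly inside $G$, contradicting minimality. Your proof instead isolates the structural fact that a disjoint union with an edge in each part is SAP iff both parts are planar, and uses it twice: once to strip $B$ down to $K_2$, and once to certify directly that every $C-f$ and $C/f$ is planar, so that $C$ is minor-minimal nonplanar and hence a Kuratowski graph. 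The paper's version is shorter because it leans on the prior lemma that $K\sqcup K_2\in\Forb{SAP}$; your version is more self-contained and makes the role of the disjoint-union observation explicit, at the cost of a few extra lines verifying that $C$ is minor-minimal nonplanar rather than just citing it.
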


\begin{proof} Let $G = G_1 \sqcup G_2$ in $\Forb{SAP}$ be the disjoint union of (nonempty) graphs 
$G_1$ and $G_2$.
Since planar graphs are SAP, at least one of $G_1$ and $G_2$, say $G_1$, is not planar. 

We first observe that $G_2$ must have an edge, $E(G_2) \neq \emptyset$. Otherwise, since $G$ is not
SAP, there is an edge $ab \in E(G)$ and a nonplanar minor $G'$, formed by deleting or contracting $ab$.
Since $G_2$ has no edges, it is planar and $ab \in E(G_1)$. If follows that 
deleting or contracting $ab$ in $G_1$ already gives a 
nonplanar graph. That is, $G_1$ is a proper minor of $G$ that is not
SAP. This contradicts our assumption that $G$ is minor minimal not SAP.

By Kuratowski's theorem, $G_1$ has a Kuratowski graph minor, $K$. We claim that $G_1 = K$. 
Further, since $G_2$ has an edge, we must have $G_2 = K_2$. For, if either of these fail, 
the graph $K \sqcup K_2$, which is not SAP by the previous lemma, is a proper minor of $G$.
This contradicts our assumption that $G$ is minor minimal for not SAP.
\qed
\end{proof}

We can now complete the argument.

\begin{theorem}The seven graphs of Figure~\ref{fig:SAP} are precisely the elements of $\Forb{SAP}$.
\end{theorem}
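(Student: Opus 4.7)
The plan is to combine the previous lemma (which resolves the disconnected case) with a case analysis of connected $G \in \Forb{SAP}$. Any such $G$ fails to be SAP, so there is an edge $ab \in E(G)$ with $G-ab$ or $G/ab$ nonplanar; since $G$ is minor minimal for not SAP, both $G-ab$ and $G/ab$ are themselves SAP as proper minors.

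The key structural observation I would establish first is that \emph{every SAP nonplanar graph is a Kuratowski graph $K \in \{K_5, K_{3,3}\}$ together with zero or more isolated vertices}. Indeed, a nonplanar component $C$ of such a graph is itself SAP, and since $C$ is connected every initial edge deletion, edge contraction, or vertex deletion on $C$ yields a subgraph of some $C-e$ or $C/e$, all planar by the SAP condition; thus $C$ is minor minimal nonplanar, forcing $C \in \{K_5, K_{3,3}\}$. A second component carrying any edge $e$ would leave the ambient graph minus $e$ still nonplanar (because $C$ is untouched), contradicting SAP.

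With this in hand, suppose first that $G-ab$ is nonplanar. Since $G$ is connected, $G-ab$ has at most two components, so either $G-ab = K$ (yielding $G = K_{3,3}+e$, the $K_5$ option being impossible because $K_5$ is complete) or $G-ab = K \sqcup \{v\}$ with $v \in \{a,b\}$, producing $G = K \dot\cup K_2$. If instead $G-ab$ is planar while $G/ab$ is nonplanar, then $G/ab$ is connected and thus equals $K$, so $G$ is a vertex split at some $\bar{a} \in V(K)$. I would then distribute the $\deg_K(\bar{a})$ neighbors of $\bar{a}$ between $a$ and $b$ and enumerate: the all-to-one-side splits turn $b$ or $a$ into a pendant and reduce to the previous case (giving $K \dot\cup K_2$); the balanced $(2,2)$ split of $K_5$ gives $K_{3,3}+2e$, which properly contains $K_{3,3}+e$ and so cannot be minor minimal; and the unbalanced splits in which one of $a,b$ has a single neighbor among those of $\bar{a}$ produce exactly the two graphs $\bar{K}$.

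The main obstacle is to exclude splits in which some vertex $v$ is a shared neighbor of both $a$ and $b$. The plan here is to note that $G-av$ is a proper minor of $G$ and that $(G-av)/ab$ still equals $K$: the merged vertex remains joined to $v$ through the surviving edge $bv$, and no other adjacency is altered by the deletion. Hence $G-av$ is not SAP, contradicting the minor-minimality of $G$. This rules out every shared-neighbor split and completes the classification.
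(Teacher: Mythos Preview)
Your argument is correct and actually sharper than the paper's. The key divergence is your structural lemma: \emph{every nonplanar SAP graph is a Kuratowski graph $K$ together with isolated vertices}. The paper never isolates this fact; instead, after producing a nonplanar $G'\in\{G-ab,\,G/ab\}$, it only argues that some $K$ sits inside $G'$ as a subgraph (rather than as the whole edge set), and then runs a case analysis on $|V(K)\cap\{a,b\}|$ (in the deletion case) or on whether $\bar a\in V(K)$ (in the contraction case), each time exhibiting one of the seven listed graphs as a \emph{minor} of $G$ and invoking minimality to force equality. Your lemma lets you skip this: once $G-ab$ or $G/ab$ is literally $K$ (possibly with one stray isolated vertex in the deletion case), $G$ itself is pinned down by adding back the edge or undoing the split, and the enumeration is short.

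Your treatment of shared neighbours in the split is also different. The paper absorbs this case silently into ``$G$ has a $\bar K$ or $K\dot\cup K_2$ minor,'' again leaning on minimality. You instead give a direct contradiction by observing that $(G-av)/ab = K$, so $G-av$ already fails SAP. That is a clean local argument and avoids the extra minor-chasing.

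One cosmetic point: in your contraction case you say the all-to-one-side split ``reduces to the previous case.'' Strictly speaking that split produces $K\dot\cup K_2$, for which $G-ab$ is nonplanar, so it cannot occur under your Case~2 hypothesis at all; it would be cleaner to say the sub-case is vacuous rather than that it reduces. This does not affect correctness, since either way $G$ lands in the list.
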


\begin{proof}
Using the previous two lemmas, it remains only to verify that if $G$ is connected and in $\Forb{SAP}$,
then it is one of the five connected graphs in the figure. Suppose $G$ is connected and minor minimal
not SAP. Since $G$ is not SAP, there is an $ab \in E(G)$ such that $G'$, a minor formed by deleting or
contracting $ab$, is not planar. Then $G'$ has a Kuratowski graph $K$ as a minor. 
In fact $K$ must appear as a subgraph of $G'$. If not, one of the two $\bar{K}$ graphs
is a minor of $G'$ and, hence also of $G$.
This contradicts our assumption that $G$ is minor minimal for not SAP.

Suppose that the nonplanar $G'$ is formed by edge deletion: $G' = G-ab$.
There are several cases depending
on the size of $V(K) \cap \{a,b\}$. If there is no common vertex, then $G$ has a $K \sqcup K_2$ minor. 
Since we assumed $G$ is minor minimal for not SAP, $G = K \sqcup K_2$, 
but this contradicts our assumption that $G$ is connected. Suppose there is one vertex in the intersection. Then $G$ has a $K \dot\cup K_2$ minor. By minor minimality, $G = K \dot\cup K_2$ and appears
in Figure~\ref{fig:SAP} as required. Finally, if $\{a,b\} \subset V(K)$, then $K$ must be $K_{3,3}$ and,
by minor minimality, $G = K_{3,3}+e$ is one of the graphs in the figure.

If instead $G' = G/ab$, let $\bar{a}$ denote the vertex that results from identifying $a$ and $b$. 
If $\bar{a} \in V(K)$, there are two possibilities.
It may be that $G$ has one of the $\bar{K}$ or $K \dot\cup K_2$ graphs of
Figure~\ref{fig:SAP} as a minor. But then, by minor minimality, $G$ is one of those graphs in the figure,
as required. The other possibility is $G$ has the $K_{3,3}+2e$ graph of Figure~\ref{fig:K332e} as a minor.
Then, $K_{3,3}+e$ is a proper minor, contradicting the minor minimality of $G$. On the other
hand, if $\bar{a} \not\in V(K)$, $G$ must have a $K \sqcup K_2$ minor. By minor minimality,
$G = K \sqcup K_2$, which contradicts our assumption that $G$ is connected.
\qed
\end{proof}

While it's difficult to convey the hard work that went into finalizing the list of seven graphs, we
hope this account gives some of the flavor of a project in this area.
This argument is, in fact, not so different from what appears in (soon to be) published 
research, see \cite{DFM,LMMPRTW}. Recall that CA, CE, and CC all have Kuratowski sets 
with at most 10 members (see Challenge 4). We can think of almost--planar as CE or CC
and SAP as CE and CC. This suggests that other combinations of the three C properties
are also likely to 
be minor closed with a small number of forbidden minors. For example, here are two ways to
combine CA and CE.

\noindent%
\textbf{Project Idea 4.} Say graph $G$ has property CACE if, for every edge $ab$ and 
every vertex $v \not\in \{a,b\}$, either $G-v$ or $G-ab$ is planar. Determine whether 
or not CACE is minor closed and find the Kuratowski set or MMCACE set. Repeat for
strongly CACE, which requires both $G-v$ and $G-ab$ planar.

\section{Additional project ideas}
In this section we propose several additional project ideas along with general strategies to develop
even more.

Let $\EE{k}$ denote the graphs of size $k$ or less. We have mentioned that this property is minor closed.
It's straightforward to verify what happens when $k = 0$.

\noindent%
\textbf{Challenge 5.} Determine the Kuratowski set for edge-free graphs, $\Forb{\EE{0}}$
and that for the corresponding apex property, $\Forb{\EE{0}'}$.

However, $\EE{1}$ is already interesting and general observations about higher $k$ would 
be worth pursuing.

\noindent%
\textbf{Project Idea 5.} Find graphs in $\Forb{\EE{1}}$. 
Find forbidden minors for $\EE{k}$ when $k \geq 2$.
Can you formulate any conjectures about $\Forb{\EE{k}}$?

In a different direction, if $\PP$ is minor closed, then so too are all $\PP^{(k)}$ where
$\PP^{(k+1)}  = (\PP^{(k)})'$.

\noindent%
\textbf{Project Idea 6.} Find graphs in $\Forb{{\EE{0}}''}$. We might call $\EE{0}''$ graphs $2$-apex edge free.
Any conjectures about $k$-apex edge free?

How about working with order instead of size?

\noindent%
\textbf{Project Idea 7.} Find forbidden minors for graphs of order at most $k$. What about apex versions of these Kuratowski sets? Any conjectures?

Naturally, one can combine these. What is the Kuratowski set for graphs that have at least two edges and three vertices?
What of graphs that have either an edge or four vertices?

These project ideas encourage you to formulate your own conjectures. 
As examples of the kinds of conjectures that might arise,
we refer to Project Idea 1. There we noticed that $K_{k+2}$ is a forbidden minor in $\T{k}$ for $k = 1, 2, 3$, 
which led us to ask if the pattern persists. That project idea also includes a guess about planar graphs, 
again based on what is known for small $k$. Recently, we made similar observations about
forbidden minors for $\Pl^{(k)}$ which is also called $k$-apex~\cite{MP}. While proving that
$K_{k+5} \in \Forb{\Pl^{(k)}}$ we were unable to confirm a stronger conjecture that
all graphs in the $K_{k+5}$ family are forbidden. (Please refer to~\cite{MP}
for the definition of a graph's
family.) We have a similar conjecture for graphs in the family of $K_{3^2, 1^k}$, a $k+2$-partite
graph with two parts of three vertices each and the remainder having only one vertex.

\noindent%
\textbf{Project Idea 8.} Prove the conjecture of~\cite{MP}: The $K_{k+5}$ and $K_{3^2,1^k}$ families 
are in $\Forb{k-\mbox{apex}}$.

So far we have focused attention on graph properties that are minor closed and most of the 
discussion in Section~2 described techniques for generating such properties. The meta-problem
of finding additional minor closed graph properties is also worthwhile.

\noindent%
\textbf{Project Idea 9.} Find a minor closed graph property $\PP$ different from those 
described to this point. Find graphs in $\Forb{\PP}$.

A survey by Archdeacon~\cite{A1} includes a listing of several more problems
on forbidden graphs; many of them
would be great undergraduate research projects.

As in Corollary~1, minor closed $\PP$ are attractive because $\Forb{\PP}$ then precisely 
characterizes graphs with the property. On the other hand, 
Corollary~2 shows that even if $\PP$ is not minor closed, there is a finite list of MM$\PP$ 
graphs that can be used to rule out the property.
The possible projects in this direction are virtually endless. Take your favorite 
graph invariant (e.g., chromatic number, girth, diameter, minimum or maximum degree, degree sequence,
etc.) and see how many MM$\PP$ graphs you can find for specific values of the invariant. Of course,
if you choose a graph property at random, 
you run the  risk of stumbling onto a MM$\PP$ list that, while finite, is rather large. In that case, 
you can simply restrict by graph order or size, for example. 

If you're fortunate enough to be working with a student with some computer skills, 
you might let her loose on the graph properties that are
built into many computer algebra systems. With computer resources, even the 300 thousand 
or so graphs of order nine or less are not out of the question, see for example~\cite{MMR}.

Finally, let us note that the recent vintage of the Graph Minor Theorem and the rather
specific interests of graph theorists leave a virtually untouched playing field open
to those of us working with undergraduates. To date, serious researchers have
focused on finding forbidden minors for a fairly narrow range of properties deemed 
important in the field. For those of us who needn't worry overly
about the significance of the result, there is tremendous freedom
to pursue pretty much any idea that comes to mind and see where it takes us. 
These are early days in this area and whichever path you choose to follow, 
there's an excellent chance of capturing a Kuratowski type theorem of 
your very own.

\end{document}